\theoremstyle{plain}
\newtheorem{theorem}{Theorem}[section]
\newtheorem{lemma}[theorem]{Lemma}
\theoremstyle{definition}
\newtheorem{defn}[theorem]{Definition}
\newtheorem{conj}[theorem]{Conjecture}
\newtheorem{definition}[theorem]{Definition}
\newtheorem{conjecture}[theorem]{Conjecture}
\newtheorem{question}[theorem]{Question}
\theoremstyle{remark}
\newtheorem{remark}[theorem]{Remark}
\def\rl{\mathbb{R}}
\def\nn{\mathbb{N}}
\def\RR{\mathbb{R}}
\def\NN{\mathbb{N}}
\newcommand{\bea}{\begin{eqnarray*}} %aligned equal signs
\newcommand{\eea}{\end{eqnarray*}}  %the * removes equation numbering
\newcommand{\bnum}{\begin{enumerate}}%shortcut for enumerated lists
\newcommand{\enum}{\end{enumerate}}
\newcommand{\bit}{\begin{itemize}}  %shortcut for itemized lists
\newcommand{\eit}{\end{itemize}}
\newcommand{\beq}{\begin{equation}}  %shortcut for equations
\newcommand{\eeq}{\end{equation}}
\begin{document} 
\title[Ultraspherical basis transformation]{Monomial to ultraspherical basis transformation and the zeros of polynomials}

\author[M.~Chasse]{Matthew Chasse}
\address{Department of Mathematics, The Royal Institute of Technology, SE-100 44 Stockholm, Sweden}
\email{chasse@math.hawaii.edu}

\keywords{total positivity, sign regularity, ultraspherical polynomials, orthogonal polynomials, Legendre polynomials}
\subjclass[2010]{ Primary 26C10; Secondary 30C15}

\begin{abstract}
We examine a result of A.~Iserles and E.~B.~Saff, use it to prove a conjecture of S.~Fisk that a linear operator which maps monomials to Legendre polynomials also preserves zeros in the interval $(-1,1)$, and state a more general version of the conjecture for the Jacobi polynomials.  
\end{abstract}

\maketitle

\section{Introduction}
   
Let $P_n^{(\alpha,\beta)}(x)$ represent the $n$-th Jacobi polynomial with parameters $\alpha$ and $\beta$, defined by the generating function \cite[p. 271]{R},

\begin{equation}\label{jacobi-gen} 
F(x,t) := \frac{2^{\alpha+\beta}}{\rho(1+t+\rho)^{\beta}(1-t+\rho)^{\alpha}} = \sum_{k=0}^\infty P_{n}^{(\alpha,\beta)}(x) t^n ,
\end{equation}
\\
where $\rho = (1-2xt+t^2)^{1/2}$.
The set $\{P_n^{(\alpha,\alpha)}(x)\}_{n=0}^\infty$, known as the ultraspherical polynomials \cite[p. 276]{R}, is also obtained from the simpler generating function
\\
\begin{equation}\label{Pgen}
G(x,t):=(1-2xt+t^2)^{-\alpha-\frac{1}{2}} = \sum_{n=0}^\infty \frac{(1+2\alpha)_n}{(1+\alpha)_n}P^{(\alpha,\alpha)}_n(x)t^n,
\end{equation}
\\
where $(x)_n = x(x+1)(x+2)\dots(x+n-1)$ is the commonly used \emph{Pochhammer symbol}.  When $\alpha=0$, the ultraspherical polynomials are the classical Legendre polynomials, which we denote by $P_k(x)$ ($P_k(x)=P^{(0,0)}_k(x)$).
We prove the following conjecture from S. Fisk's electronic book \cite{fisk}, which is left as an open problem in the book's appendices.
\begin{conj}{\rm (S. Fisk \cite[p. 724, Question 39]{fisk})}\label{Pconj}
If $f(x)=\sum_{k=0}^n a_k x^k\in\rl[x]$ has all of its zeros in the interval $(-1,1)$ then $T[f](x)=\sum_{k=0}^n a_k P_k(x)$ has all of its zeros in $(-1,1)$.
\end{conj}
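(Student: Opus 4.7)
The plan is to derive an integral representation for $T[f]$ from Laplace's formula for the Legendre polynomials, and then reduce the localization of zeros to the Iserles--Saff criterion examined in the preceding sections. Applying the classical Laplace integral
$$P_{n}(x)=\frac{1}{\pi}\int_{0}^{\pi}\bigl(x+\sqrt{x^{2}-1}\,\cos\theta\bigr)^{n}\,d\theta,$$
which follows from the generating function \eqref{Pgen} at $\alpha=0$, and summing termwise against the coefficients $a_{k}$, I obtain
$$T[f](x)=\frac{1}{\pi}\int_{0}^{\pi}f\!\left(x+\sqrt{x^{2}-1}\,\cos\theta\right)d\theta.$$
Under the Joukowski parametrization $x=\tfrac{1}{2}(w+w^{-1})$ with $|w|\geq 1$ (and the matching branch $\sqrt{x^{2}-1}=\tfrac{1}{2}(w-w^{-1})$), the argument of $f$ inside the integrand sweeps out the straight complex line segment from $w^{-1}$ to $w$ as $\theta$ runs from $0$ to $\pi$, so the action of $T$ is controlled by the behavior of $f$ along these Joukowski chords.

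The argument then splits into three steps. First, because $P_{n}$ has the positive leading coefficient $(2n)!/(2^{n} n!^{2})$ and $a_{n}\neq 0$, the polynomial $T[f]$ has degree exactly $n$. Second, the endpoints $x=\pm 1$ are excluded immediately: from $P_{k}(\pm 1)=(\pm 1)^{k}$ one gets
$$T[f](\pm 1)=\sum_{k=0}^{n}a_{k}(\pm 1)^{k}=f(\pm 1),$$
which is nonzero since all zeros of $f$ lie strictly inside $(-1,1)$ by hypothesis. Third, for any $x_{0}\in\mathbb{C}\setminus[-1,1]$, corresponding to some $|w_{0}|>1$, I would invoke the Iserles--Saff result to conclude that the weighted integral of $f$ along the Joukowski segment $[w_{0}^{-1},w_{0}]$ is nonzero, ruling out zeros of $T[f]$ off the interval $[-1,1]$. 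Combined with the degree count, this forces all $n$ roots of $T[f]$ into the open interval $(-1,1)$.

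The main obstacle, and the reason the conjecture is not routine, is the third step. The Joukowski segment $[w_{0}^{-1},w_{0}]$ generically meets the real interval $(-1,1)$: for real $w_{0}$ with $|w_{0}|>1$, the segment is itself a real interval containing $\pm 1$ in its interior and overlapping the locus where $f$ may change sign; for non-real $w_{0}$, the segment crosses the real axis at the point $2\,\mathrm{Re}(w_{0})/(1+|w_{0}|^{2})\in(-1,1)$, again a point where $f$ may vanish. Thus a pointwise ``$f\neq 0$ on the contour'' argument is unavailable, and one must exploit an algebraic sign-regularity / total-positivity property of the monomial-to-Legendre basis change itself. It is precisely this cancellation-control that the Iserles--Saff criterion supplies, and it is the crux of the proof; once the non-vanishing of the weighted integral is secured, the localization of zeros follows by the degree-and-endpoints bookkeeping above.
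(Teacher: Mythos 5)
Your proposal has a genuine gap at precisely the point you yourself flag as the crux: the third step is asserted, not proved, and the way you propose to discharge it does not match what the Iserles--Saff machinery actually provides. Theorem \ref{key} is a statement about expansions in an orthogonal basis when the kernel $\sum_k \delta_k Q_k(x)R_k(t)$ is strictly sign regular; it says nothing about the non-vanishing of a weighted integral of $f$ along a Joukowski chord attached to a point $x_0\notin[-1,1]$, and you give no reduction of that non-vanishing claim to the theorem. Moreover, even quoting the published result \cite[Proposition 3]{IS} directly would not suffice: it concerns the map $x^k\mapsto (2k+1)P_k(x)$, not Fisk's map $x^k\mapsto P_k(x)$, and the diagonal factor $2k+1$ cannot simply be dropped, since it alters the operator coefficient by coefficient. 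Removing that factor is the substantive content of the paper's argument: one applies the operator $2t\frac{\partial}{\partial t}+2\alpha+1$ to the generating function \eqref{Pgen}, obtaining the kernel $G_2(x,t)=(2\alpha+1)(1-t^2)(1-2xt+t^2)^{-\alpha-3/2}$ whose coefficients carry $\delta_k=2k+2\alpha+1$, which then cancels against $h_k$ in Theorem \ref{key}; the strict sign regularity of $G_2$ follows from Lemma \ref{legendre-gen-stp} together with Theorem \ref{factortheorem}. Your proposal contains no mechanism addressing this normalization issue, and no proof of any sign-regularity statement at all.

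What you do have is sound but peripheral: the Laplace representation $T[f](x)=\frac{1}{\pi}\int_0^\pi f\bigl(x+\sqrt{x^2-1}\cos\theta\bigr)\,d\theta$ is correct, the degree count is correct, and the observation that $P_k(\pm1)=(\pm1)^k$ gives $T[f](\pm1)=f(\pm1)\neq0$ is a nice touch which, for this particular transformation, would substitute for the paper's Lemma \ref{open-sets} once the zeros are known to lie in $[-1,1]$. But, as you concede, the Joukowski segment generically meets $(-1,1)$ where $f$ changes sign, so the integral representation alone yields nothing, and the cancellation control you defer to ``the Iserles--Saff result'' is exactly what remains unproved. As it stands, the proposal is a correct treatment of the easy bookkeeping together with an accurate description of the difficulty, not a proof of the conjecture.
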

The proof of Conjecture \ref{Pconj} is accomplished through a technique of A.~Iserles and E.~B.~Saff.  Many similar transformations involving orthogonal bases have already been established by Iserles, N{\o}rsett, and Saff using the same method in conjunction with other identities \cite{IN1, IN2, IS}.  Our main result is the following.

\begin{theorem}\label{main}
If $f(x)=\sum_{k=0}^n a_k x^k\in\rl[x]$ has all of its zeros in the interval $(-1,1)$ then 
\[
T[f](x)=\sum_{k=0}^n a_k \frac{k!}{\Gamma(k+1+\alpha)} P^{(\alpha,\alpha)}_k(x)
\]
 has all of its zeros in $(-1,1)$ for $\alpha>-1$.
\end{theorem}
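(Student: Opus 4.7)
The strategy is to realize $T$ as an integral operator using the classical Laplace-type integral representation of the ultraspherical polynomials, then analyze the resulting integral directly. Recall that the Gegenbauer polynomials $C_k^{(\lambda)}$ admit, for $\lambda > 0$ and $\theta \in [0,\pi]$,
\[
C_k^{(\lambda)}(\cos\theta) = \frac{(2\lambda)_k}{k!\,B(\lambda,1/2)}\int_0^\pi \bigl(\cos\theta + i\sin\theta\cos\phi\bigr)^k (\sin\phi)^{2\lambda-1}\, d\phi.
\]
Combining this with the identity $P_k^{(\alpha,\alpha)} = \frac{(\alpha+1)_k}{(2\alpha+1)_k}C_k^{(\alpha+1/2)}$ (immediate from comparing (\ref{jacobi-gen}) and (\ref{Pgen})) and simplifying the Beta-function constants, one obtains for $\alpha > -1/2$ the clean identity
\[
\frac{k!}{\Gamma(k+1+\alpha)}P_k^{(\alpha,\alpha)}(\cos\theta) = \frac{1}{\sqrt{\pi}\,\Gamma(\alpha+1/2)}\int_0^\pi \bigl(\cos\theta + i\sin\theta\cos\phi\bigr)^k (\sin\phi)^{2\alpha}\, d\phi.
\]
The normalization $k!/\Gamma(k+1+\alpha)$ appearing in Theorem \ref{main} is precisely what cancels the $k$-dependent factor $(2\lambda)_k/k!$, making it possible to sum under the integral sign.

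Multiplying by $a_k$ and summing, and noting that the imaginary part of the integrand is odd under $\phi\mapsto\pi-\phi$ (since $f$ has real coefficients), gives the key representation
\[
T[f](\cos\theta) = \frac{1}{\sqrt{\pi}\,\Gamma(\alpha+1/2)}\int_0^\pi \operatorname{Re} f\bigl(\cos\theta + i\sin\theta\cos\phi\bigr)(\sin\phi)^{2\alpha}\, d\phi.
\]
Thus $T[f](\cos\theta)$ is a positively weighted average of $\operatorname{Re} f$ along the vertical chord joining $e^{\pm i\theta}$ inside the closed unit disk. At the boundary $\theta\in\{0,\pi\}$ the chord collapses and $T[f](\pm 1)$ has the same sign as $f(\pm 1)\neq 0$.

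The main obstacle is to show that $T[f](\cos\theta)$ admits at least $n$ sign changes on $(0,\pi)$; since $T[f]$ is a polynomial of degree $n$ in $\cos\theta$, this forces all its zeros into $(-1,1)$. Writing $f(x)=\prod_{j=1}^n (x-\cos\theta_j)$ with $\theta_j\in(0,\pi)$ and expanding the integrand by the product formula, I would exhibit sample points $0<\vartheta_0<\vartheta_1<\cdots<\vartheta_n<\pi$, chosen relative to the $\theta_j$, at which the weighted integral alternates in sign. This combinatorial-trigonometric sign analysis is the technical heart of the Iserles--Saff method. An alternative is a homotopy through $f_s(x)=\prod(x-sr_j)$: at $s=0$ one has $T[x^n]=\tfrac{n!}{\Gamma(n+1+\alpha)}P_n^{(\alpha,\alpha)}(x)$, whose zeros lie in $(-1,1)$ by classical orthogonal polynomial theory, and the boundary values $T[f_s](\pm 1)$ never vanish, so Hurwitz's theorem keeps zeros inside $(-1,1)$ provided one can preclude the simultaneous coalescence of two real zeros into a complex pair.

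Finally, the integral representation is only valid for $\alpha > -1/2$. For $\alpha\in(-1,-1/2]$ I would extend the conclusion by analytic continuation in $\alpha$: the leading coefficient of $T_\alpha[f]$ is nonzero throughout $\alpha>-1$, so the degree is preserved; and since $T_\alpha[f]$ varies continuously in $\alpha$ with constant sign at $\pm 1$, Hurwitz's theorem prevents zeros from leaving $(-1,1)$ as $\alpha$ descends through $-1/2$.
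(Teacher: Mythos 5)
Your integral representation is correct (for $\alpha>-1/2$ the Laplace-type formula and the constant bookkeeping check out, and indeed $T[f](\pm 1)=f(\pm 1)/\Gamma(\alpha+1)\neq 0$), but the proof has a genuine gap exactly where the difficulty lies: you never establish that $T[f](\cos\theta)$ actually has $n$ sign changes on $(0,\pi)$, i.e.\ that no zeros of $T[f]$ are complex. The ``combinatorial-trigonometric sign analysis'' is only announced, not carried out, and it is not what Iserles--Saff do; their argument is not a sample-point analysis of this chord average but a biorthogonality/total-positivity argument. Your fallback homotopy $f_s(x)=\prod_j(x-sr_j)$ suffers from the defect you yourself flag: continuity of zeros plus non-vanishing of $T[f_s]$ at $\pm 1$ only prevents zeros from crossing $\pm 1$ along the real axis; it cannot prevent two real zeros inside $(-1,1)$ from colliding and moving off into the complex plane, which is precisely the scenario the theorem must exclude. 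The same hole reappears in your final analytic continuation in $\alpha$ down to $(-1,-1/2]$: continuity in $\alpha$ and non-vanishing at $\pm 1$ do not preserve reality of the zeros. So the heart of the theorem is missing.

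For comparison, the paper closes this gap by invoking the Iserles--Saff theorem (Theorem \ref{key}): applying $2t\frac{\partial}{\partial t}+2\alpha+1$ to the generating function \eqref{Pgen} yields $G_2(x,t)=(2\alpha+1)(1-t^2)(1-2xt+t^2)^{-\alpha-3/2}$, whose strict sign regularity on $(-1,1)\times(-1,1)$ follows from Lemma \ref{legendre-gen-stp} (valid for all $\alpha>-1$, so no separate continuation is needed); the biorthogonal-polynomial machinery (Theorem \ref{real-zeros}) then forces $T[f]$ to have $n$ distinct zeros in $(-1,1)$, with Lemma \ref{open-sets} sharpening $[-1,1]$ to $(-1,1)$ in the non-simple-zero case. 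Your chord-average representation is in fact closely related to why the kernel is totally positive, but by itself it does not substitute for the sign-regularity/biorthogonality step; to salvage your route you would need either to carry out the alternation argument in full or to import some other mechanism (e.g.\ total positivity of the averaging kernel) that guarantees reality of the zeros.
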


Conjecture \ref{Pconj} follows from Theorem \ref{main} with $\alpha=0$.  Iserles and Saff already proved that $x^n\to  (n!/\Gamma(n+1+\alpha)) (2n+2\alpha+1)P^{(\alpha,\alpha)}_n(x)$ maps polynomials whose zeros lie in the interval $(-1,1)$ to polynomials whose zeros lie in the interval $[-1,1]$ \cite[Proposition 3]{IS}.  We modify the generating function \eqref{Pgen} with a differential operator and apply the observation in Lemma \ref{open-sets} (below) to change the Iserles--Saff proof into the one we require. 

We provide a short proof of the Iserles--Saff theorem in Appendix \ref{biortho-polys}, both for reference and to aid in the discussion, as we state it in a more general form.  The characterization of all linear transformations which preserve the set of polynomials whose zeros lie on an interval is an open problem as mentioned in \cite{IN2}, and stated more formally in \cite{BB1, CC3}.  A successful characterization would hopefully provide simple proofs of Theorem \ref{main} and Theorem \ref{key}. 

Total positivity and sign regularity are introduced in Section \ref{section-tp}, along with Theorem \ref{key} of Iserles and Saff.  We also discuss problems which appear difficult with the currently available techniques for establishing sign regularity.  
In Section \ref{section-proof}, we prove Theorem \ref{main} and pose a conjecture which extends it (Conjecture \ref{jacobi-trans}).

{\bf Acknowledgements.}  The author thanks Professor P.~Br\"and\'en for his comments and corrections, and Professor G.~Csordas for suggesting the application of the Iserles--Saff theorem to some of Fisk's conjectures.

\section{Total positivity and the Iserles-N{\o}rsett-Saff theorem}\label{section-tp}

In the proof of Conjecture \ref{Pconj}, we rely on the strict sign regularity (Definition \ref{def:ssr}) of a generating function related to the transformation.   

\begin{defn}\label{def:ssr}
The kernel function $G:X\times Y\to\rl$ is said to be \emph{strictly sign regular} (SSR) if, given any two increasing m-tuples  $x_1 < x_2 < \ldots < x_m$,  $y_1 < y_2 < \ldots < y_m$, in $X$ and $Y$ respectively, 

\beq
\varepsilon(m)\left| \begin{array}{cccc}
G(x_1,y_1) &  G(x_1,y_2) & \ldots & G(x_1,y_m) \\
G(x_2,y_1) & G(x_2,y_2)  & \ldots & G(x_2,y_m)\\
\vdots & \vdots & \ddots & \vdots \\
G(x_m,y_1)  & G(x_m,y_2)  & \ldots & G(x_m,y_m) \end{array} \right|>0.
\label{detcond}
\eeq
for all $m\in\nn$, where $\varepsilon(m)$ is a function of $m$ only and has the range $\{-1,1\}$.  If $\varepsilon(m)=1$, the kernel $G(x,t)$ is said to be \emph{strictly totally positive} (STP), and if equality is allowed \emph{totally positive} (TP)  (see \cite{karlin}). 
\end{defn}

A sequence of functions $\phi_0(x), \phi_1(x),\dots, \phi_n(x)$ is said to constitute a \emph{Tchebyshev system} on $a<x<b$ if for any set of real constants $\{c_k\}$, not all zero, $\sum_{k=0}^n c_k\phi_k(x)$ does not vanish more than $n$ times on $a<x<b$ \cite[p.~24]{karlin}.  If every subset of a Tchebyshev system is also a Tchebyshev system, then $\phi_y(x)$ is SSR.  
%If $\{\phi_k(x)\}_{0\le k\le j}$ is a Tchebyshev system for $1\le j\le \infty$ for a sequence $\{\phi_k(x)\}_{k=0}^\infty$, then we call $\{\phi_k(x)\}_{k=0}^\infty$ a \emph{complete Tchebyshev sequence}. 

\begin{theorem}[{\cite[p. 18]{karlin}}]\label{factortheorem}
Let $K(x,y)$ be $SSR$ \textup{(}$x\in X$ and $y\in Y$\textup{)}, and suppose $\varphi(x)$ and $\psi(x)$ maintain the same constant sign on $X$ and $Y$, respectively.  Then
\bnum
\item $L(x,y)=\varphi(x)\psi(y)K(x,y)$ is $SSR$.

\item Now suppose $u=\varphi^{-1}(x)$ and $v=\psi^{-1}(x)$ be strictly increasing functions mapping $X$ and $Y$ onto $U$ and $V$, respectively, where $\varphi^{-1}$ and $\psi^{-1}$ are the inverse functions of $\varphi$ and $\psi$, respectively.

Consider
$$ L(u,v) = K[\varphi(u),\psi(x)] \text{ \hspace{ 5mm} } u\in U, v\in V.$$
Then $L(u,v)$ is $SSR$ and with signs $\varepsilon_m(K) = \varepsilon_m(L)$.  If $\phi(u)$ is strictly increasing while $\psi(v)$ is strictly decreasing, then $L(u,v)$ is $SSR$ and $\varepsilon_m(K) = (-1)^{\frac{m(m-1)}{2}}\varepsilon_m(L)$.
\enum
\end{theorem}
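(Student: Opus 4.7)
My plan is to reduce both assertions to elementary manipulations of the determinant in \eqref{detcond}. For part~(1), fix increasing tuples $x_1<\cdots<x_m$ in $X$ and $y_1<\cdots<y_m$ in $Y$. By multilinearity, pulling $\varphi(x_i)$ out of the $i$th row and $\psi(y_j)$ out of the $j$th column of $[L(x_i,y_j)]$ gives
\[
\det[L(x_i,y_j)]=\Bigl(\prod_{i=1}^m\varphi(x_i)\Bigr)\Bigl(\prod_{j=1}^m\psi(y_j)\Bigr)\det[K(x_i,y_j)].
\]
Because $\varphi$ has constant sign on $X$, the product $\prod_i\varphi(x_i)$ equals $(\operatorname{sgn}\varphi)^m$ times a strictly positive quantity, and the analogous statement holds for $\psi$; in particular, the sign of the scalar prefactor depends only on $m$. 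Combining this with $\varepsilon_m(K)\det[K(x_i,y_j)]>0$, we obtain a function $\varepsilon_m(L)\in\{-1,1\}$ depending only on $m$ for which $\varepsilon_m(L)\det[L(x_i,y_j)]>0$, proving that $L$ is SSR.

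Part~(2) amounts to tracking how the determinant transforms under a monotone change of variables. Since $\varphi$ is strictly increasing, any increasing tuple $u_1<\cdots<u_m$ in $U$ corresponds to an increasing tuple $\varphi(u_1)<\cdots<\varphi(u_m)$ in $X$, and the analogous statement holds for $\psi$ when $\psi$ is also increasing. In that case $[L(u_i,v_j)]=[K(\varphi(u_i),\psi(v_j))]$ is literally the evaluation of $K$ at a valid pair of increasing tuples, from which one immediately reads off $\varepsilon_m(L)=\varepsilon_m(K)$. If instead $\psi$ is strictly decreasing, then $v_1<\cdots<v_m$ corresponds to $\psi(v_1)>\cdots>\psi(v_m)$; reversing the column order to restore increasing $y$-values is a permutation of sign $(-1)^{m(m-1)/2}$, and tracking its effect on $\det[L(u_i,v_j)]$ yields the claimed relation $\varepsilon_m(K)=(-1)^{m(m-1)/2}\varepsilon_m(L)$.

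No step in this plan is a serious obstacle, as the theorem is essentially a repackaging of determinant multilinearity together with the parity of the reversal permutation. The only point that warrants care is the sign bookkeeping in part~(1): one must verify that the sign of $\prod_i\varphi(x_i)\prod_j\psi(y_j)$ is determined by $m$ alone rather than by the particular tuples $(x_i)$ and $(y_j)$, and this is precisely where the hypothesis that $\varphi$ and $\psi$ have \emph{constant} sign is used. Once that observation is in place, both conclusions follow in a single line each.
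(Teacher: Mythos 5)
Your proof is correct: part (1) is exactly the row/column multilinearity argument with the constant-sign hypothesis guaranteeing the prefactor's sign depends only on $m$, and part (2) is the reversal-permutation bookkeeping, whose sign is indeed $(-1)^{m(m-1)/2}$. The paper itself gives no proof of this theorem (it is quoted from Karlin, \emph{Total Positivity}, p.~18), and your argument is essentially the standard one found there, so there is nothing to add beyond noting that the constant-sign hypothesis also ensures $\varphi$ and $\psi$ never vanish, which you implicitly use when you call the prefactor strictly positive up to sign.
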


\begin{theorem}[Composition Rule]\label{cbtheorem}
Let $K$, $L$ and $M$ be Borel-measurable functions of two variables satisfying
\[
M(x,y) = \int_Z K(x,\eta)L(\eta,y) d\sigma(\eta),
\]
where the integral is assumed to converge absolutely.  Suppose the variables $x$, $y$, and $\eta$ are in subsets of the real line $X$, $Y$, and $Z$ respectively, and $d\sigma(\eta)$ is a sigma finite measure on $Z$.  If both $K$ and $L$ are SSR (STP), then so is $M$ \cite[p. 16]{karlin}. 
\end{theorem}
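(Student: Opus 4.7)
The plan is to establish the Composition Rule via the continuous analogue of the Cauchy--Binet formula. Fix $m\in\nn$ and two ordered tuples $x_1<\cdots<x_m$ in $X$ and $y_1<\cdots<y_m$ in $Y$; the goal is to identify the sign of $\det[M(x_i,y_j)]_{1\le i,j\le m}$.

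First I would substitute $M(x_i,y_j)=\int_Z K(x_i,\eta)L(\eta,y_j)\,d\sigma(\eta)$ into the Leibniz expansion of the $m\times m$ determinant. Absolute convergence of the defining integral, combined with Tonelli's theorem, permits interchanging the finite alternating sum with an iterated integral, collapsing the expression into a single integral over $Z^m$. Relabeling the dummy variables and exploiting antisymmetry of the determinant to fold the integral onto the ordered simplex yields the Cauchy--Binet identity
\[
\det[M(x_i,y_j)]_{i,j=1}^m = \int_{\eta_1<\cdots<\eta_m} \det[K(x_i,\eta_k)]_{i,k=1}^m \, \det[L(\eta_k,y_j)]_{k,j=1}^m \, d\sigma(\eta_1)\cdots d\sigma(\eta_m).
\]

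Next I would invoke the strict sign regularity hypotheses. On the ordered simplex, $\varepsilon_m(K)\det[K(x_i,\eta_k)]>0$ and $\varepsilon_m(L)\det[L(\eta_k,y_j)]>0$ by definition, so multiplying the displayed identity by $\varepsilon_m(K)\varepsilon_m(L)$ renders the integrand strictly positive at every point of the simplex. Provided $\sigma^{\otimes m}$ charges the ordered simplex---which is automatic when $\sigma$ has support of cardinality at least $m$, and is implicit in Karlin's framework---the integral is strictly positive, forcing $\varepsilon_m(M)=\varepsilon_m(K)\varepsilon_m(L)$. The STP subcase is then the specialization $\varepsilon_m\equiv 1$; alternatively one observes directly that a pointwise-positive integrand has positive integral.

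The step I expect to be the main obstacle is the rigorous derivation of the Cauchy--Binet identity above: justifying the interchange of summation and integration, and then the passage from the integral over $Z^m$ to the integral over the simplex, demands that the integrand produced by the Leibniz expansion lies in $L^1(d\sigma^{\otimes m})$. This is precisely where \emph{absolute} convergence (rather than mere convergence) of the integral defining $M$ enters, and it is worth being careful about. A secondary, more minor point in the SSR case is ensuring strict inequality: if the support of $\sigma$ is too sparse, positivity of the integrand on the simplex does not translate into a strictly positive integral, and the conclusion must be relaxed from strict sign regularity to ordinary total positivity.
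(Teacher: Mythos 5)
Your proposal is correct and coincides with the argument the paper relies on: the theorem is quoted from Karlin, whose proof is exactly the continuous Cauchy--Binet (basic composition) formula you derive, followed by the sign count $\varepsilon_m(M)=\varepsilon_m(K)\varepsilon_m(L)$. Your closing caveat about strictness is well taken --- Karlin's strict version does require $\sigma$ to have at least $m$ points of increase (harmless in the paper's application, where $\sigma$ has infinite support) --- so your treatment is, if anything, slightly more careful than the bare statement given here.
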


\begin{theorem}[{\cite[p. 15]{karlin}}]\label{exy}
$$K(x, y)=e^{xy}$$
is STP for $x,y\in\rl$.
\end{theorem}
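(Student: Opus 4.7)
The plan is to establish that for any $x_1<x_2<\cdots<x_m$ the family $\{e^{x_1 y},\, e^{x_2 y},\ldots,e^{x_m y}\}$ forms a Tchebyshev system on $\rl$, conclude from this that the determinant in \eqref{detcond} never vanishes, and then pin down the sign with a limit that reduces the matrix to a Vandermonde.

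First I would prove the Tchebyshev property by induction on $m$ using Rolle's theorem. The case $m=1$ is trivial since $c_1 e^{x_1 y}$ has no real zeros when $c_1\neq 0$. For the inductive step, suppose a nontrivial $f(y)=\sum_{i=1}^{m}c_i e^{x_i y}$ has $m$ or more real zeros. Since $e^{-x_m y}>0$, the function $g(y)=e^{-x_m y}f(y)=c_m+\sum_{i=1}^{m-1}c_i e^{(x_i-x_m)y}$ shares those zeros, so by Rolle's theorem its derivative $g'(y)=\sum_{i=1}^{m-1}c_i(x_i-x_m)e^{(x_i-x_m)y}$ has at least $m-1$ real zeros. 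The exponents $x_i-x_m$ for $i<m$ are still distinct, contradicting the inductive hypothesis unless $c_i(x_i-x_m)=0$ for all $i<m$; this forces $c_1=\cdots=c_{m-1}=0$ and then $c_m=0$.

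Consequently, for any $x_1<\cdots<x_m$ and $y_1<\cdots<y_m$, the determinant in \eqref{detcond} is nonzero, since any vanishing would furnish a nontrivial linear combination of the $e^{x_i y}$ with $m$ real zeros. To fix the sign I would use continuity: the set $\{(x_1,\ldots,x_m,y_1,\ldots,y_m)\in\rl^{2m}:x_1<\cdots<x_m,\ y_1<\cdots<y_m\}$ is open and convex, hence connected, so the nowhere-vanishing continuous determinant has constant sign on it. To read off that sign I would specialize to $y_j=\varepsilon(j-1)$ for a small $\varepsilon>0$, whereupon $e^{x_i y_j}=t_i^{j-1}$ with $t_i=e^{\varepsilon x_i}$. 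Since $t_1<t_2<\cdots<t_m$, the resulting Vandermonde determinant $\prod_{i<k}(t_k-t_i)$ is positive, so the determinant in \eqref{detcond} is positive in general and the kernel is STP.

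There is no serious obstacle to this approach; both the Rolle's-theorem argument for Tchebyshev systems and the Vandermonde reduction for the sign are classical. The only point requiring care is the bookkeeping in the inductive step, where one must correctly track how many real zeros survive multiplication by the nowhere-vanishing factor $e^{-x_m y}$ and subsequent differentiation, and observe that the surviving exponents remain pairwise distinct so that the inductive hypothesis applies.
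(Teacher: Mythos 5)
Your proof is correct. The paper itself does not prove this statement---it is quoted from Karlin's book---and your argument (Rolle's theorem induction showing that $\{e^{x_iy}\}_{i=1}^m$ is a Tchebyshev system, hence the determinant in \eqref{detcond} never vanishes, then a sign determination by connectedness of the ordered-tuple domain and specialization to a Vandermonde determinant) is essentially the classical argument behind the cited result. One cosmetic remark: no limit is actually needed at the end, since any fixed $\varepsilon>0$ already gives $y_1<\cdots<y_m$ and an increasing sequence $t_i=e^{\varepsilon x_i}$, so the Vandermonde evaluation alone fixes the sign; and in the inductive step you are implicitly counting distinct zeros, which is exactly what Rolle's theorem and the Tchebyshev-system definition require, so the bookkeeping is sound.
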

It is worth noting that a generalization of Theorem \ref{exy} exists for power series in the form $\sum_{k=0}^\infty a_kx^ky^k$ \cite[Part V, \#86]{PSz2}.

Let $\RR^n/S_n$ be $n$-dimensional Euclidean space modulo permutations of the coordinates, e.g. $\RR^2/S_2=\{ \{(x_1,x_2),(x_2,x_1)\} : x_1, x_2\in\RR \}$.  Then, $\RR^n/S_n$ is a metric space when endowed with a Hausdorff distance between equivalence classes, 
\[
d(y_1,y_2)= \inf_{a\in y_1, b\in y_2} ||a-b||_{\infty},
\]
for $y_1,y_2\in\RR^2/S_2$, where $||(x_1,\dots,x_n)||_{\infty}=\max\{|x_1|,\dots,|x_n|\}$.  This provides a natural topology for the zeros of polynomials of a fixed degree.  Let $I$ be an open or closed interval, and let $I^n/S_n$ denote the subset of $\RR^n/S_n$ where every component lies in the interval $I$.  The topology on $\RR^n/S_n$ induced by the metric $d(\cdot,\cdot)$ then has open sets which are generated by unions of the ``open intervals'' $(a,b)^n/S_n$, $a,b\in\RR$, $a\le b$.

\begin{lemma}\label{open-sets}
Suppose $T:\RR[x]\to\RR[x]$ is degree preserving, linear, invertible, and maps polynomials whose zeros lie in the interval $(c,d)$ to polynomials whose zeros lie in the interval $[a,b]$. Then $T$ furthermore maps polynomials whose zeros lie in $(c,d)$ to polynomials whose zeros lie in $(a,b)$.
\end{lemma}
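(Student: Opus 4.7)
The plan is to interpret the statement topologically on the space $X_n$ of real polynomials of degree exactly $n$, viewed as an open subset of $\RR^{n+1}$ via its coefficients. Let $U_n$, $V_n$, $W_n\subset X_n$ denote the subsets of polynomials whose zeros lie respectively in $(c,d)$, $[a,b]$, and $(a,b)$. Using the continuity of the map sending a polynomial to its unordered zero tuple (with respect to the Hausdorff-type metric $d$ discussed just before the lemma), $U_n$ and $W_n$ are open while $V_n$ is closed in $X_n$. The hypothesis reads $T(U_n)\subset V_n$, and the conclusion to be established is $T(U_n)\subset W_n$.

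First I would observe that $T$ restricts to a homeomorphism of $X_n$: invertibility gives injectivity, degree-preservation gives $T(X_n)\subset X_n$ (and the same applied to $T^{-1}$ gives equality), and continuity of both $T$ and $T^{-1}$ is automatic because they are linear maps on the finite-dimensional space $X_n$. Consequently $T(U_n)$ is open in $X_n$, since it is the image of the open set $U_n$ under a homeomorphism.

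The heart of the argument is the identity $W_n=\mathrm{int}(V_n)$. The inclusion $W_n\subset\mathrm{int}(V_n)$ is immediate because $W_n$ is open and contained in $V_n$. For the reverse inclusion I would take any $g\in V_n\setminus W_n$, so without loss of generality $g$ has a zero at $a$, and factor $g(x)=(x-a)^k p(x)$ with $p(a)\neq 0$. Considering the one-parameter perturbation $g+\epsilon$ and analyzing the local model $p(a)(x-a)^k+\epsilon$ near $a$, one sign of $\epsilon$ produces a real root of $g+\epsilon$ strictly less than $a$ (for odd $k$ directly, and for even $k$ via the pair of real roots straddling $a$ obtained when $\epsilon/p(a)<0$). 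Since $g+\epsilon$ lies in $X_n$ and tends to $g$ as $\epsilon\to 0$, this shows $g\notin\mathrm{int}(V_n)$, establishing $\mathrm{int}(V_n)\subset W_n$.

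Combining these facts, $T(U_n)$ is an open subset of $V_n$ and therefore lies in $\mathrm{int}(V_n)=W_n$, which is the desired conclusion. The only non-formal step is the perturbation argument for a zero of possibly high multiplicity at an endpoint, and I expect no genuine obstacle beyond the routine case analysis on the parity of $k$ sketched above; everything else is a direct consequence of continuity of roots and of $T$ being a homeomorphism on each $X_n$.
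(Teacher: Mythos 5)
There is a genuine gap, and it sits exactly at the two topological claims your argument rests on. In the coefficient space $X_n\subset\RR^{n+1}$ the sets $U_n$ and $W_n$ are \emph{not} open, and $W_n\neq\mathrm{int}(V_n)$. The reason is multiple real zeros: take $g(x)=(x-m)^2$ with $m\in(a,b)$ (or a degree-$n$ polynomial with such a factor and the remaining zeros simple in $(a,b)$). Then $g\in W_n$, but $g+\epsilon$ has a pair of non-real zeros for every small $\epsilon>0$, so arbitrarily small perturbations of $g$ leave $V_n$ entirely; hence $g\notin\mathrm{int}(V_n)$ and $W_n\not\subset\mathrm{int}(V_n)$ (in fact $\mathrm{int}(V_n)$ consists of the polynomials with $n$ \emph{simple} zeros in $(a,b)$). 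The same example shows $U_n$ is not open. Your appeal to continuity of the zero map does not give openness of $U_n$ or $W_n$, because on $X_n$ the zeros live in $\CC^n/S_n$, and $(c,d)^n/S_n$ is not open there: a real zero of even multiplicity can escape into the complex plane. Consequently the two pillars of your final step --- ``$T(U_n)$ is open'' and ``$\mathrm{int}(V_n)=W_n$'' --- both fail, and the conclusion $T(U_n)\subset W_n$ does not follow. The endpoint-perturbation analysis you did carry out correctly proves only $\mathrm{int}(V_n)\subseteq W_n$, which is the direction that is not needed.

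The natural repair is to work relative to the set of real-rooted polynomials (equivalently, in the zero-configuration space $\RR^n/S_n$), where $U_n$ and $W_n$ are relatively open and $(a,b)^n/S_n$ is the relative interior of $[a,b]^n/S_n$; but then openness of $T(U_n)$ is no longer automatic, since $T$ is a homeomorphism of $X_n$ and need not preserve the set of real-rooted polynomials. This is precisely where the paper's proof takes a different route: it defines the induced map $\phi:(c,d)^n/S_n\to[a,b]^n/S_n$ sending the zeros of $Q$ to the zeros of $T[Q]$, notes that both $\phi$ and $\phi^{-1}$ are well defined and continuous (linearity of $T$, $T^{-1}$ plus Hurwitz's theorem), and deduces an open-mapping statement in $\RR^n/S_n$, so that the image cannot meet the boundary configurations having a zero at $a$ or $b$. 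If you want to pursue your coefficient-space formulation, you would need a substitute for that step --- some argument showing $T(U_n)$ cannot touch the polynomials with a zero at an endpoint --- and that is the actual content of the lemma, not a routine consequence of $T$ being a linear homeomorphism.
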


\begin{proof}
Let $Q(x)=\sum_{k=0}^nq_kx^k$ be an arbitrary polynomial with zeros in $(c,d)$. 
Let the map $\phi:(c,d)^n/S_n\to[a,b]^n/S_n$ be defined by the mapping of the zeros of $Q$ to the zeros of $T[Q]$.  As $T$ is degree preserving and invertible, both $\phi$ and $\phi^{-1}$ are well-defined, and both are continuous by linearity and Hurwitz's theorem.  Therefore, open sets in $(c,d)^n/S_n$ are mapped by $\phi$ to open sets in $[a,b]^n/S_n$, whence the zeros of $T[Q(x)]$ must lie in $(a,b)$, which covers all open sets contained in $[a,b]^n/S_n$.
\end{proof}

Theorem \ref{key} of A.~Iserles and E.~B.~Saff is required for our proof of Conjecture \ref{Pconj}, and a short proof is provided for the reader's convenience in Appendix \ref{biortho-polys}.  The original version of Theorem \ref{key} is sharpened here by using the observation in Lemma \ref{open-sets}, and relaxing the domain basis to a complete Tchebyshev system.

\begin{theorem}[A. Iserles and E. B. Saff \cite{IS}]\label{key}
Let $\{Q_n(x)\}_{n=0}^\infty$ be an orthogonal sequence of polynomials satisfying 
\[
\int_{a}^b Q_n(x)Q_m(x) dx = \begin{cases}h_m , & \text{ for } m=n\cr
0, & \text{ for } m\neq n.
\end{cases}
\]
Let $\{R_k\}_{k=0}^\infty$ be a set of polynomials, with $\deg(R_k)=k$ (thus any subset  $\{R_k\}_{k=0}^n$, $n\in\NN$ is a Tchebyshev system on $(c,d)$).  
If the generating function $G(x,t)=\sum_{t=0}^\infty \delta_kQ_k(x)R_k(t)$ is SSR for all $x\in(a,b)$ and $t\in(c,d)$, and all the zeros of the polynomial $\sum_{k=0}^nq_kR_k(x)$ are in the interval $(c,d)$, then all the zeros of the polynomial 
\[
\sum_{k=0}^n\frac{q_k}{\delta_kh_k}Q_k(x)
\]
are in the interval $(a,b)$.
\end{theorem}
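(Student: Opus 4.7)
The plan is to derive an integral representation for $p(t) := \sum_{k=0}^n q_k R_k(t)$ using the orthogonality of $\{Q_k\}$ and then invoke the variation-diminishing principle for SSR kernels. First I would substitute the series $G(x,t) = \sum_{k=0}^\infty \delta_k Q_k(x) R_k(t)$ into $\int_a^b G(x,t)\,T[p](x)\,dx$, where $T[p](x) = \sum_{k=0}^n \frac{q_k}{\delta_k h_k} Q_k(x)$; the orthogonality relation $\int_a^b Q_k Q_j\,dx = h_k\delta_{jk}$ collapses the series and yields the key identity
\[
p(t) = \int_a^b G(x,t)\, T[p](x)\, dx.
\]

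Next I would first treat the generic case in which $p$ has $n$ simple zeros in $(c,d)$. Because $\{R_0,\ldots,R_n\}$ is a Tchebyshev system on $(c,d)$, the leading coefficient $q_n$ cannot vanish (else $\sum_{k=0}^{n-1} q_k R_k$ would be a nontrivial member of a Tchebyshev system of size $n$ with $n$ distinct zeros), so $\deg T[p] = n$. Choose $n+1$ points $t_0 < \cdots < t_n$ in $(c,d)$ at which the values $p(t_i)$ alternate in sign. Suppose for contradiction that $T[p]$ has at most $m-1 < n$ sign changes on $(a,b)$, and partition $(a,b)$ into the $m$ maximal intervals $J_1,\ldots,J_m$ on which $T[p]$ has constant (and alternating) signs $\sigma_1,\ldots,\sigma_m$. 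Then the integral identity reads
\[
p(t_i) = \sum_{j=1}^m \sigma_j A_{ij}, \qquad A_{ij} := \int_{J_j} G(x,t_i)\,|T[p](x)|\,dx \geq 0.
\]
Combining Theorem \ref{factortheorem} (multiplication by the nonnegative factor $|T[p](x)|$) with the Composition Rule (Theorem \ref{cbtheorem}), the $(n+1)\times m$ matrix $A = (A_{ij})$ inherits sign regularity from $G$, and Karlin's variation-diminishing argument implies that the vector $A\sigma$ exhibits at most $m-1$ sign changes among its $n+1$ coordinates, contradicting the $n$ sign changes of $(p(t_i))_i$. Hence $T[p]$ has at least $n$ sign changes on $(a,b)$, and therefore exactly $n$ distinct real zeros in $(a,b)$.

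For the general case in which $p$ may have multiple zeros in $(c,d)$, I would approximate $p$ by polynomials $p_\varepsilon$ with $n$ simple zeros in $(c,d)$, apply the simple-zero conclusion, and invoke Hurwitz's theorem to deduce that all zeros of $T[p]$ lie in the closed interval $[a,b]$. Lemma \ref{open-sets}, applied to the degree-preserving, linear, invertible map $T$, then upgrades the conclusion to the open interval $(a,b)$.

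I expect the main obstacle to be the determinantal core of the argument—extracting from the sign regularity of $G$ (via Theorems \ref{factortheorem} and \ref{cbtheorem}) the variation-diminishing bound on the number of sign changes of $A\sigma$. Once that inequality is in hand, the rest of the proof reduces to careful bookkeeping together with the perturbation-and-Hurwitz step.
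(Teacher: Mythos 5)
Your route is correct in outline but genuinely different from the paper's. You derive the same key identity $p(t)=\int_a^b G(x,t)\,T[p](x)\,dx$ that the paper gets from the moment computation $\int_a^b Q_k(x)G(x,t)\,dx=\delta_k h_k R_k(t)$; but where the paper then identifies $T[p]$ as the $n$-th member of a biorthogonal polynomial system with respect to $G(x,t)\,dx$ and the zeros $t_1,\dots,t_n$ of $p$, and quotes the Iserles--N{\o}rsett result (Theorem \ref{real-zeros} of the appendix) to conclude that $T[p]$ has $n$ distinct zeros in $(a,b)$, you instead run the variation-diminishing argument for sign-regular kernels directly: alternation points of $p$ in $(c,d)$, maximal sign intervals $J_1,\dots,J_m$ of $T[p]$, and a discretized kernel composition. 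Your approach is self-contained modulo Karlin's variation-diminishing theory (which the paper never invokes), at the cost of re-proving what the paper outsources to the biorthogonal framework; it also makes the mechanism (sign changes of $p$ force sign changes of $T[p]$) more transparent. Two details in your determinantal core need tightening. First, Theorem \ref{factortheorem} requires a factor of constant nonvanishing sign, while $|T[p](x)|$ vanishes at finitely many points; it is cleaner to absorb $\mathbf{1}_{J_j}(x)\,|T[p](x)|$ into the second kernel $L(x,j)$ of the Composition Rule (Theorem \ref{cbtheorem}). Second, that kernel $L$ is only totally positive, not strictly so, hence the composition gives sign regularity of $A$ possibly with vanishing minors; strictness (and with it the clean bound that $A\sigma$ has at most $m-1$ sign changes, contradicting the $n$ alternations of $(p(t_i))_i$) is recovered by noting that for any increasing choice of rows $t_{i_1}<\dots<t_{i_r}$ and columns $j_1<\dots<j_r$ there is a positive-measure set of ordered points $x_\ell\in J_{j_\ell}$ with $T[p](x_\ell)\neq 0$, so every minor of $A$ carries the strict sign $\varepsilon_r$ of $G$. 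You also rightly check $q_n\neq 0$ via the Tchebyshev property so that $\deg T[p]=n$. Your endgame --- perturb to simple zeros, apply Hurwitz to land in $[a,b]$, then upgrade to $(a,b)$ via Lemma \ref{open-sets} --- is exactly the paper's.
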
 

All of the transformations which have been obtained so far with Theorem \ref{key} have $R_k(t)=t^k$.  For other starting bases, showing the kernel
\[
G(x,t)=\sum_{t=0}^\infty \delta_kQ_k(x)R_k(t)
\]
is SSR on some region appears quite difficult. Two papers by J.~Burbea \cite{burbea2,burbea} establish the strict total positivity of several replicating kernels by extending the notion of total positivity and sign regularity to curves in the complex plane, but this approach is very restrictive.  For natural extensions of total positivity to the complex plane, compositions of the type in Theorem \ref{cbtheorem} are generally not possible without unusually convenient phase behavior of the kernels involved.  
%Theorem \ref{key} is intriguing in that it has the form of a result concerning generalized convexity \cite[p. 24, Proposition 3.3]{karlin}.  
%Forgoing any meaningful extension of Theorem \ref{key}, any advance in determining strict sign regularity of a kernel could have significance alone.  
The lack of tools for determining totally positivity and sign regularity of kernels prevents application of Theorem \ref{key} in some desirable situations.
For example, in \cite{IS}, a comment is made that although the kernel
\[
\frac{1}{(1-2xy+y^2)^{\beta}}
\]
is not strictly totally positive on $(-1,1)\times (-1,1)$ for $\beta<0$, it appears to be strictly sign regular there. 
Similarly, if the range of parameters for which \eqref{jacobi-gen} is STP or SSR could be determined, a large number of transformations involving the Jacobi polynomials would follow.

\section{Proof for the ultraspherical polynomial transform}\label{section-proof}

Recall that for $\Re(\alpha)>-1$, $\Re(\beta)>-1$, the Jacobi polynomials satisfy the orthogonality relation \cite[p. 260]{R}

\begin{align}
&\int_{-1}^1 P^{(\alpha,\beta)}_n(x)P^{(\alpha,\beta)}_m(x) (1-x)^\alpha(1+x)^\beta dx = \label{ortho} \\
&\qquad \begin{cases} \displaystyle \frac{2^{1+\alpha+\beta}\Gamma(1+\alpha+n)\Gamma(1+\beta+n)}{(2n+1+\alpha+\beta)n!\Gamma(1+\alpha+\beta+n)}, & \text{ for } m=n\\
\\
 \displaystyle 0, & \text{ for } m\neq n.\nonumber
\end{cases} 
\end{align}
From Theorems \ref{cbtheorem} and \ref{exy} one obtains the next lemma.

\begin{lemma}[\cite{IS}]\label{handy}
The function
\[
K(x,y) = \frac{1}{(x+y)^{\beta}},
\]
where $\beta>0$, is STP for $x,y\in(0,\infty)$ 
\end{lemma}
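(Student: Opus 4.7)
The plan is to represent $1/(x+y)^{\beta}$ as a Laplace-type composition of two elementary sign-regular kernels. Specifically, for $\beta>0$ and $x,y>0$,
\[
\frac{1}{(x+y)^{\beta}} \;=\; \frac{1}{\Gamma(\beta)}\int_0^\infty t^{\beta-1} e^{-xt} e^{-yt}\,dt,
\]
the integral converging absolutely (exponential decay at $\infty$, integrable singularity at $0$ since $\beta>0$). Writing $K(x,t)=e^{-xt}$ and $L(t,y)=t^{\beta-1}e^{-yt}$, I want to view the right-hand side as a kernel composition to which Theorem \ref{cbtheorem} applies with $d\sigma(t)=dt$ on $Z=(0,\infty)$.

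The next step is to verify that each factor is SSR on $(0,\infty)\times(0,\infty)$ with the \emph{same} sign pattern. By Theorem \ref{exy}, $e^{xy}$ is STP on $\rl\times\rl$. Applying part (2) of Theorem \ref{factortheorem} with the increasing substitution $x\mapsto x$ and the decreasing substitution $t\mapsto -t$, one obtains that $e^{-xt}$ is SSR on $(0,\infty)\times(0,\infty)$ with signs $\varepsilon_m=(-1)^{m(m-1)/2}$. The identical argument gives the same sign pattern for $e^{-ty}$, and since $t^{\beta-1}>0$ on $(0,\infty)$, part (1) of Theorem \ref{factortheorem} says multiplying by $t^{\beta-1}$ preserves this, so $L(t,y)=t^{\beta-1}e^{-ty}$ is SSR with the same $\varepsilon_m=(-1)^{m(m-1)/2}$.

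Finally, I would invoke the Composition Rule (Theorem \ref{cbtheorem}). At the level of minors, it amounts to the continuous Cauchy--Binet identity
\[
\det\bigl[M(x_i,y_j)\bigr]_{i,j=1}^m \;=\; \int_{0<t_1<\cdots<t_m}\det\bigl[K(x_i,t_k)\bigr]\det\bigl[L(t_k,y_j)\bigr]\,dt_1\cdots dt_m,
\]
so that the sign patterns of the two factors \emph{multiply}: $(-1)^{m(m-1)/2}\cdot(-1)^{m(m-1)/2}=1$. Hence every $m\times m$ minor of $M(x,y)=\Gamma(\beta)/(x+y)^{\beta}$ is strictly positive, i.e.\ $M$ is STP on $(0,\infty)\times(0,\infty)$, and dividing by the positive constant $\Gamma(\beta)$ gives the lemma. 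The only delicate point is the sign bookkeeping from the two orientation reversals; once that is pinned down, everything else is classical, since both factor kernels reduce to exponentials and their composition is exactly the Laplace representation of $(x+y)^{-\beta}$.
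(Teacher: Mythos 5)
Your proof is correct and is essentially the argument the paper intends: the paper derives Lemma \ref{handy} in one line from Theorems \ref{cbtheorem} and \ref{exy} via exactly this Gamma-function representation $\Gamma(\beta)/(x+y)^{\beta}=\int_0^\infty t^{\beta-1}e^{-xt}e^{-yt}\,dt$, and your Cauchy--Binet sign bookkeeping correctly shows the two factors' signs $(-1)^{m(m-1)/2}$ cancel. If you want to avoid that bookkeeping entirely, substitute $s=-t$ and write the kernel as $\frac{1}{\Gamma(\beta)}\int_{-\infty}^{0}|s|^{\beta-1}e^{xs}e^{sy}\,ds$, so both factors are restrictions of the STP kernel $e^{uv}$ and Theorem \ref{cbtheorem} gives STP directly.
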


By simple substitutions with Theorem \ref{factortheorem}, see \cite{IS}, Lemma \ref{handy} is transformed into what we require.

\begin{lemma}\label{legendre-gen-stp}
The function
\[
K(x,y) = \frac{1}{(1-2xy+y^2)^{\beta}},
\]
where $\beta>0$, is STP for $x,y\in(-1,1)$. 
\end{lemma}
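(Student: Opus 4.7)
The plan is to reduce the claim to Lemma \ref{handy} using the reparametrization rules of Theorem \ref{factortheorem}. The essential step is to locate a separable identity writing $1-2xy+y^2$ as a positive product of one-variable factors times a sum of the form $u(x)+v(y)$; once that is in hand, Lemma \ref{handy} supplies the STP of $1/(u+v)^\beta$, and the rest of the argument is mechanical.

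To find such an identity, I would combine the two obvious decompositions $1-2xy+y^2 = (1-y)^2 + 2y(1-x) = (1+y)^2 - 2y(1+x)$. A short calculation gives
\[
(1-x)(1+y)^2 + (1+x)(1-y)^2 \;=\; 2(1-2xy+y^2),
\]
so dividing by $(1+x)(1+y)^2$, which is positive on $(-1,1)^2$, and setting $u(x):=(1-x)/(1+x)$ and $v(y):=(1-y)^2/(1+y)^2$ yields
\[
\frac{1}{(1-2xy+y^2)^\beta} \;=\; \frac{2^\beta}{(1+x)^\beta(1+y)^{2\beta}}\cdot\frac{1}{\bigl(u(x)+v(y)\bigr)^\beta}.
\]

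Both $u$ and $v$ are strictly decreasing bijections from $(-1,1)$ onto $(0,\infty)$. Starting from the STP kernel $1/(u+v)^\beta$ on $(0,\infty)^2$ and applying the variant in Theorem \ref{factortheorem}(2), each reversal of orientation contributes a sign $(-1)^{m(m-1)/2}$ to the $m\times m$ determinants, and the two contributions cancel; equivalently, the matrices obtained from ordered tuples in $(-1,1)$ are just row-and-column reversals of the STP matrices on $(0,\infty)^2$. Hence $1/(u(x)+v(y))^\beta$ is STP on $(-1,1)^2$. Finally, the prefactor $2^\beta(1+x)^{-\beta}(1+y)^{-2\beta}$ splits as $\varphi(x)\psi(y)$ with both $\varphi$ and $\psi$ positive on $(-1,1)$, so Theorem \ref{factortheorem}(1) shows that multiplication by it preserves STP and completes the proof. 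The main obstacle is just spotting the separable identity; once it is known, the conclusion is a routine composition of the standard operations in Theorem \ref{factortheorem}.
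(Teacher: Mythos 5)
Your proof is correct and follows essentially the same route as the paper, which simply invokes ``simple substitutions with Theorem \ref{factortheorem}'' applied to Lemma \ref{handy} (citing Iserles--Saff); you have merely made those substitutions explicit via the identity $(1-x)(1+y)^2+(1+x)(1-y)^2=2(1-2xy+y^2)$, and your handling of the two decreasing changes of variable and the positive separable prefactor is accurate.
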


\begin{proof}[Proof of Theorem \ref{main}]
The series on the right-hand side of  \eqref{Pgen} is known to converge for all $x,t\in(-1,1)$.
Operating on \eqref{Pgen} with $\left(2t\frac{\partial}{\partial t} + 2\alpha + 1 \right)$, we obtain a second generating function which converges for the same range of parameters:
\begin{align*}
G_2(x, t) =& \left(2t\frac{\partial}{\partial t} + 2\alpha + 1 \right)G(x,t)\\
=& \frac{(2\alpha+1)(1-t^2)}{(1-2xt+t^2)^{\alpha+3/2}} \\
=& \sum_{k=0}^\infty (2k+\alpha+1)\frac{(1+2\alpha)_k}{(1+\alpha)_k}P^{(\alpha,\alpha)}_k(x)t^k.
\end{align*}
\noindent
By Theorem \ref{key}, with $\delta_k=2k+2\alpha+1$, and (from \eqref{ortho} with $\alpha=\beta> -1$)
\[
h_k=\frac{2^{1+\alpha}(\Gamma(1+\alpha+n))^2}{k!(2k+2\alpha+1)\Gamma(1+2\alpha+2k)},
\] 
it is sufficient to show that $G_2(x,t)$ is SSR for $x,t\in(-1,1)$.  By Theorem \ref{factortheorem}, the factor $(2\alpha+1)(1-t^2)$ can be dropped, and it is sufficient to check that
\[
 \frac{1}{(1-2xt+t^2)^{\alpha+3/2}} 
\]
is SSR for $\alpha>-1$, which follows by Lemma \ref{legendre-gen-stp}. 
\end{proof}

It is natural to question whether Theorem \ref{main} can be extended to the Jacobi Polynomials. Fisk has posed the following related question.

\begin{question}[{\cite[p. 724, Question 40]{fisk}}]\label{jacobi-fact}
For which $\alpha$ and $\beta$ does the transformation $x^n\to P^{(\alpha,\beta)}_n(x)/n!$ preserve the set of polynomials with only real zeros? 
\end{question}

Numerical experiments using polynomials of the form $(x-1)^n(x+1)^m$, $n+m\le14$, suggest the restriction $\alpha,\beta\ge0$ in Question \ref{jacobi-fact}.  As an extension of Conjecture \ref{Pconj} (settled by Theorem \ref{main}) we state the following conjecture, which has been verified using Mathematica for $\alpha,\beta<5$, with $(x-1)^n(x+1)^m$, $n+m\le14$, and a large number of random polynomials of degree less than ten with zeros in the interval $(-1,1)$.

\begin{conjecture}\label{jacobi-trans}
If $\alpha$ and $\beta$ are non-negative integers, then $x^n\to P^{(\alpha,\beta)}_n(x)$ preserves the set of polynomials whose zeros lie only in the interval $(-1,1)$.
\end{conjecture}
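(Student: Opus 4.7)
The natural plan is to imitate the proof of Theorem \ref{main}: apply Theorem \ref{key} with $Q_k(x)=P_k^{(\alpha,\beta)}(x)$, $R_k(t)=t^k$, and $(a,b)=(c,d)=(-1,1)$. The Jacobi generating function \eqref{jacobi-gen} already expresses $F(x,t)=\sum_k Q_k(x)t^k$ with $\delta_k=1$, while the corresponding $h_k$ are given by \eqref{ortho}. To match the specific normalization $x^n\to P_n^{(\alpha,\beta)}(x)$ demanded by the conjecture, one would precondition $F(x,t)$ by a differential operator of the form $p(t\partial_t)$, exactly as $G$ was converted to $G_2$ in the proof of Theorem \ref{main}, so that the new $k$-dependent constants $p(k)\delta_k h_k$ match $1$. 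The passage from the closed to the open interval is then handled by Lemma \ref{open-sets}.

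With the bookkeeping in place, the whole argument collapses to showing that $F(x,t)$ (possibly multiplied by a polynomial in $t$, which affects only a sign by Theorem \ref{factortheorem}) is SSR on $(-1,1)\times(-1,1)$. Writing $\rho=\sqrt{1-2xt+t^2}$, one has
\[
F(x,t)=\frac{2^{\alpha+\beta}}{\rho\,(1+t+\rho)^{\beta}(1-t+\rho)^{\alpha}}.
\]
The factor $\rho^{-2\gamma}=(1-2xt+t^2)^{-\gamma}$ is STP for $\gamma>0$ by Lemma \ref{legendre-gen-stp}, but the remaining factors $(1\pm t+\rho)^{-1}$ are not of this form. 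A natural tactic is to seek integral representations that exhibit $(1\pm t+\rho)^{-1}$ as compositions of simpler STP kernels, so that Theorem \ref{cbtheorem} applies; when $\alpha,\beta\in\NN$, one may instead expand these factors via the binomial theorem or Rodrigues-type identities and argue directly on the resulting finite sum. Alternatively, one could try to induct on $\beta$, starting from the ultraspherical case $\beta=\alpha$ handled by Theorem \ref{main}, by expressing $P_n^{(\alpha,\beta+1)}$ in terms of $\{P_k^{(\alpha,\beta)}\}_k$ via the known parameter-shift relations, and showing that the corresponding transfer operator is itself a zero-preserver on $(-1,1)$.

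The main obstacle is precisely the SSR question for $F(x,t)$, explicitly flagged in Section \ref{section-tp} as the present bottleneck for any Jacobi extension of the framework; even the simpler kernel $(1-2xy+y^2)^{-\beta}$ with $\beta<0$ is only conjecturally SSR on $(-1,1)\times(-1,1)$. The numerical restriction to non-negative integer $\alpha,\beta$ in Conjecture \ref{jacobi-trans} strongly hints that any successful proof must exploit integrality in an essential way---perhaps through a finite polynomial identity, or through the fact that $(1-x)^\alpha(1+x)^\beta$ then becomes a polynomial weight permitting new composition identities---rather than through a clean decomposition valid for all real $\alpha,\beta>-1$. Absent such a device, the tools of Karlin-type sign-regularity theory appear to fall short, and genuinely new criteria for sign regularity of the Jacobi kernel seem to be needed.
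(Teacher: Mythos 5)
The statement you are addressing is Conjecture \ref{jacobi-trans}, which the paper itself leaves open: the author offers only numerical evidence (Mathematica checks on $(x-1)^n(x+1)^m$ and random polynomials), explicitly flags in Section \ref{section-tp} that the sign regularity of the Jacobi kernel \eqref{jacobi-gen} is unknown, and proves nothing beyond the ultraspherical case $\alpha=\beta$ of Theorem \ref{main}. Your proposal correctly reconstructs the natural line of attack (Theorem \ref{key} with $Q_k=P_k^{(\alpha,\beta)}$, $R_k(t)=t^k$, preconditioning the generating function, then Lemma \ref{open-sets}) and correctly identifies the obstruction, but it does not close it: the SSR of $F(x,t)$ on $(-1,1)\times(-1,1)$ is never established, the suggested binomial/Rodrigues expansions and the induction on $\beta$ via parameter-shift relations are not carried out, and the claim that the relevant ``transfer operator'' from $\{P_k^{(\alpha,\beta)}\}$ to $\{P_k^{(\alpha,\beta+1)}\}$ preserves zeros in $(-1,1)$ is itself an open problem of essentially the same difficulty as the conjecture. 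So what you have written is a research plan that matches the paper's own discussion of why the problem is hard, not a proof; there is no argument here that could be checked or repaired.

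There is also a bookkeeping gap worth noting even within the plan. Theorem \ref{key} only ever produces transformations of the specific form $x^k\mapsto Q_k(x)/(\delta_k h_k)$, so to obtain exactly $x^n\mapsto P_n^{(\alpha,\beta)}(x)$ you would need a preconditioning operator $p(t\partial_t)$ with $p(k)=1/h_k$; by \eqref{ortho}, $1/h_k$ involves the ratio $k!\,\Gamma(1+\alpha+\beta+k)/\bigl(\Gamma(1+\alpha+k)\Gamma(1+\beta+k)\bigr)$ times $(2k+1+\alpha+\beta)$, which is not a polynomial in $k$ in general (already for $\alpha=\beta=1$ it contains the factor $(k+2)/(k+1)$). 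In the ultraspherical proof the operator $2t\partial_t+2\alpha+1$ worked because it both matched the normalization of Theorem \ref{main} and, fortuitously, produced a closed-form kernel that factors as a function of $t$ times an STP kernel covered by Lemma \ref{legendre-gen-stp}; neither feature is available here. Your closing diagnosis --- that integrality of $\alpha,\beta$ must be exploited in a genuinely new way, beyond Karlin-type composition arguments --- is consistent with the paper, but it is a diagnosis, not a proof, and the conjecture remains open on both your account and the paper's.
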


\renewcommand\refname{References}

\newpage 

\appendix

\section{Biorthogonal polynomials}\label{biortho-polys}

\begin{definition}
Let $(a,b)\subseteq\RR$, and let the sequence $\{t_k\}_{k=1}^\infty\subset\Omega\subset\RR$ be such that $t_k\neq t_\ell$ when $k\neq\ell$.  Define a bivariate distribution $\phi:(a,b)\times\Omega\to [0,\infty)$.  A \emph{biorthogonal polynomial system} with respect to $\phi$ is a set of polynomials $\{p_m\}_{m=0}^\infty$ such that $\deg(p_m)=m$, and 
\beq
\int_a^b p_m(x) d\phi(x,t_\ell) = 0 \qquad 1\le\ell\le m. 
\eeq
A member $p_m$ said to be a \emph{biorthogonal polynomial}.
\end{definition}

Note that biothogonal polynomials are only orthogonal to the specified subset of measure functions and not necessarily with respect to each other.

\begin{theorem}[{\cite{IN1}}]
Let 
\[
I_k(t):=\int_a^bx^kd\phi(x,t), \qquad k\ge0. 
\]
There exists a unique biorthogonal polynomial system with respect to $\phi$ if and only if 
\[
D_m(t_1,t_2,\ldots,t_m):=
\begin{vmatrix}
I_0(t_1) & I_1(t_1) & \cdots & I_{m-1}(t_1)\\
\vdots     &            &        & \\
I_0(t_m) & I_1(t_m) & \cdots & I_{m-1}(t_m)
\end{vmatrix}
\neq 0.
\]
\end{theorem}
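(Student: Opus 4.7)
The plan is to translate biorthogonality at each degree into a square linear system for the coefficients of $p_m$ and recognize $D_m$ as the determinant of that system. First I would write $p_m(x)=\sum_{k=0}^m c_{m,k} x^k$ and expand the conditions $\int_a^b p_m(x)\, d\phi(x,t_\ell)=0$ for $\ell=1,\ldots,m$ termwise to obtain the homogeneous system $\sum_{k=0}^m c_{m,k} I_k(t_\ell)=0$, which is $m$ linear equations in the $m+1$ coefficients of $p_m$. To pin $p_m$ down as an honest degree-$m$ polynomial up to normalization, I would then fix $c_{m,m}=1$ (monic normalization), reducing to an $m\times m$ inhomogeneous system in $c_{m,0},\ldots,c_{m,m-1}$ whose coefficient matrix is exactly the matrix whose determinant is $D_m(t_1,\ldots,t_m)$ (up to transposition).

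For the ``if'' direction, I would observe that when $D_m\neq 0$ for all $m\ge 1$, Cramer's rule provides a unique solution at each stage, yielding a unique monic biorthogonal sequence $\{p_m\}_{m\ge 0}$. For the ``only if'' direction, suppose $D_m=0$ for some $m$. Then the columns $\bigl(I_k(t_\ell)\bigr)_{\ell=1}^m$, $k=0,\ldots,m-1$, are linearly dependent over $\RR$, so there exists a nonzero polynomial $q$ of degree at most $m-1$ satisfying the full list of biorthogonality conditions at $t_1,\ldots,t_m$. If any biorthogonal $p_m$ of exact degree $m$ exists, then $p_m+\lambda q$ is also biorthogonal of exact degree $m$ for every $\lambda\in\RR$; since $q$ has strictly lower degree than $p_m$, these polynomials are not scalar multiples of each other, contradicting uniqueness. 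If on the other hand no biorthogonal polynomial of exact degree $m$ exists, then existence already fails. Either way, a unique biorthogonal system cannot exist.

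This is a standard linear-algebra argument, and I do not anticipate a real obstacle. The main care points are (i) making precise what ``unique'' is supposed to mean, since biorthogonal polynomials are defined only up to nonzero scalar multiplication, so one must agree on a normalization such as monicity; and (ii) confirming that the $m\times m$ matrix produced by that normalization really has determinant $D_m$ rather than some slightly different arrangement, which amounts to reading off the row/column indexing convention used in the statement.
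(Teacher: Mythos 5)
Your argument is correct, and in fact the paper itself gives no proof of this statement: it is quoted from Iserles--N{\o}rsett \cite{IN1}, where the standard proof is precisely the one you outline (monic normalization, the $m\times m$ system with coefficient matrix $\bigl[I_k(t_\ell)\bigr]$ whose determinant is $D_m$, Cramer's rule for the forward direction, and a nontrivial kernel element $q$ of degree at most $m-1$ destroying uniqueness or existence for the converse). Your two stated care points --- fixing the normalization that makes ``unique'' meaningful and checking that the matrix matches $D_m$ --- are exactly the right ones, and both are handled correctly.
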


A weight function $\phi(x,t)$ which satisfies $D_m(t_1,t_2,\ldots,t_m)\neq 0$ is said to be \emph{regular}.
Assume that $d\phi(x,t) = \omega(x,t)d\alpha(x)$, where $\alpha(x)$ is a distribution independent of $t$.  The weight function $\omega$ is said to possess the \emph{interpolation property}, if for $m\ge 1$, $\{t_k\}_{k=1}^\infty\subset\Omega\subset\RR$, distinct $x_1,x_2,\ldots,x_m\in(a,b)$,  and $y_1,y_2,\ldots,y_m\in\RR$, there exist real constants $\beta_1, \beta_2, \ldots, \beta_m$ such that 
\[
\sum_{\ell=1}^m \beta_\ell\omega(x_k,t_\ell) = y_k \qquad 1\le k\le m
\]

\begin{theorem}[{\cite{IN1}}]
The distribution  $\omega(x,t)$ possesses the interpolation property if and only if 
\[
\det\left[\omega(x_k,t_\ell) \right]_{j=1,k=1}^{j=m,k=m}\neq 0,
\]
for all $m\in\NN$, and all possible sequences $x_1,x_2,\ldots\in(a,b)$ and $t_1,t_2,\ldots\in\Omega\subset\RR$.
\end{theorem}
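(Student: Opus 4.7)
The plan is to reduce the statement to the elementary linear-algebra fact that, for a square matrix $M\in\RR^{m\times m}$, the system $M\beta=y$ is solvable for every right-hand side $y\in\RR^m$ if and only if $\det M\neq 0$. Fix $m\ge 1$, fix distinct $x_1,\dots,x_m\in(a,b)$, and let $t_1,\dots,t_m\in\Omega$ be the first $m$ entries of the sequence appearing in the definition. Form the $m\times m$ matrix
\[
M=\bigl[\omega(x_k,t_\ell)\bigr]_{k,\ell=1}^{m}.
\]
With $\beta=(\beta_1,\dots,\beta_m)^{T}$ and $y=(y_1,\dots,y_m)^{T}$, the identities $\sum_{\ell=1}^m\beta_\ell\omega(x_k,t_\ell)=y_k$, $1\le k\le m$, are precisely the linear system $M\beta=y$. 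The interpolation property asserts solvability of this system for every $y\in\RR^m$ and every admissible data, while the determinant hypothesis asserts $\det M\neq 0$ for every such data.

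For the ``$\Leftarrow$'' direction I would assume $\det M\neq 0$ for all $m$ and all admissible choices of nodes and parameters. Then for any given $x_k$, $t_\ell$, and $y_k$, the vector $\beta=M^{-1}y$ exists uniquely and witnesses the interpolation property at that level. Quantifying over $m$ recovers the full interpolation property.

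For the ``$\Rightarrow$'' direction I would argue by contrapositive. Suppose there exist $m$, distinct $x_1,\dots,x_m\in(a,b)$, and $t_1,\dots,t_m\in\Omega$ with $\det M=0$. Since $M$ is square, its image is a proper subspace of $\RR^m$, so one can pick some $y\in\RR^m$ not lying in the image of $M$. For this particular $y$ together with the chosen $x_k$ and $t_\ell$, no real coefficients $\beta_1,\dots,\beta_m$ can satisfy the interpolation identities, contradicting the interpolation property. Hence every such determinant must be nonzero.

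There is no substantive obstacle; the only care required is to match quantifiers, since the interpolation property ranges over all $m$, all sequences $\{t_k\}$, and all distinct nodes $x_1,\dots,x_m$ in $(a,b)$ together with arbitrary target values, while the determinant condition ranges over the same $m$ and the same truncations $x_1,\dots,x_m$ and $t_1,\dots,t_m$. Once the correspondence $M\beta=y$ is made explicit, the equivalence is immediate from the standard fact that surjectivity of a linear endomorphism of $\RR^m$ is equivalent to nonvanishing of its determinant.
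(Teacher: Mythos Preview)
Your argument is correct: once the interpolation identities are written as the linear system $M\beta=y$ with $M=[\omega(x_k,t_\ell)]_{k,\ell=1}^m$, the equivalence reduces to the standard fact that a square real matrix defines a surjective (equivalently, bijective) map on $\RR^m$ if and only if its determinant is nonzero. Both directions you give are clean, and your remark on matching the quantifiers is appropriate.

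There is nothing to compare against, however: the paper does not supply its own proof of this statement. It is quoted from \cite{IN1} and stated without proof in the appendix, where only Theorem~\ref{key} receives an argument. Your proof is exactly the elementary linear-algebra justification one would expect for such a statement, so there is no divergence in method to discuss.
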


\begin{remark}
Clearly, if $\omega(x,t)$ is totally positive or strictly sign regular on $(a,b)\times\Omega$ (see Definition \ref{def:ssr}) then it has the interpolation property on $(a,b)\times\Omega$ as well.
\end{remark}

\begin{theorem}[{\cite{IN1}}]\label{real-zeros}
If $\omega(x,\cdot)\in C_1(t)$, possesses the interpolation property, and corresponds to a regular distribution, then each corresponding biorthogonal polynomial $p_m(x)= p_m(x,t_1,\ldots,t_m)$ has $m$ distinct zeros in $(a,b)$.
\end{theorem}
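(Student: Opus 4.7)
I would argue by contradiction, following the classical pattern for showing that orthogonal polynomials of degree $m$ have $m$ simple zeros in the interval of integration, but building the auxiliary test function out of the biorthogonal weights rather than out of polynomials. Suppose $p_m$ has only $k < m$ zeros of odd order in $(a,b)$, labeled $y_1 < \cdots < y_k$, and set $r(x) := \prod_{i=1}^{k} (x - y_i)$ (with $r \equiv 1$ if $k = 0$). Then $p_m(x) r(x)$ has constant sign on $(a,b)$ and is not $\alpha$-negligible, so $\int_a^b p_m(x) r(x)\, d\alpha(x) \ne 0$.

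The heart of the plan is to produce a nonzero function $\psi(x) = \sum_{\ell=1}^{m} \beta_\ell\, \omega(x, t_\ell)$ whose sign on $(a,b)$ matches that of $r(x)$. Given such a $\psi$, the product $p_m \psi$ has constant sign on $(a,b)$, forcing $\int_a^b p_m \psi\, d\alpha \ne 0$; but linearity and biorthogonality give
\[
\int_a^b p_m(x)\psi(x)\,d\alpha(x) = \sum_{\ell=1}^{m} \beta_\ell \int_a^b p_m(x)\,d\phi(x, t_\ell) = 0,
\]
a contradiction. Since $\deg p_m = m$, the $m$ sign changes forced by this contradiction must be simple and account for all zeros of $p_m$.

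To construct $\psi$ I would invoke the equivalent formulation of the interpolation property noted in the Remark: the $m \times m$ matrix $[\omega(x_i, t_\ell)]$ is nonsingular for all distinct $x_1, \ldots, x_m \in (a,b)$, so $\{\omega(\cdot, t_1), \ldots, \omega(\cdot, t_m)\}$ is a Chebyshev system of dimension $m$ on $(a,b)$. The natural candidate is the cofactor expansion along the top row of
\[
\psi(x) = \det \begin{pmatrix} \omega(x, t_1) & \cdots & \omega(x, t_m) \\ \omega(\eta_1, t_1) & \cdots & \omega(\eta_1, t_m) \\ \vdots & & \vdots \\ \omega(\eta_{m-1}, t_1) & \cdots & \omega(\eta_{m-1}, t_m) \end{pmatrix},
\]
which lies in $\mathrm{span}\{\omega(\cdot, t_\ell)\}$ and vanishes at each node $\eta_j$. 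Setting $\eta_j = y_j$ for $1 \le j \le k$ handles the zeros of $r$. In the extremal case $k = m - 1$ no further nodes are needed: the Chebyshev property forces $\psi$ to have exactly those $m - 1$ simple zeros in $(a,b)$, each a sign change, so an overall sign adjustment aligns $\psi$ with $r$ throughout.

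The principal obstacle is the regime $k < m - 1$, where $m - 1 - k$ extra auxiliary nodes are needed and, by the Chebyshev property, each such node in $(a,b)$ by default contributes an extra sign change of $\psi$ that spoils the constant sign of $p_m \psi$. I expect the remedy to be a limiting or perturbation procedure — coalescing pairs of auxiliary nodes so that the corresponding zeros of $\psi$ become touching (non-sign-changing) zeros rather than simple sign changes — for which the hypothesis $\omega(x, \cdot) \in C_1(t)$ provides the smoothness that permits coincident rows of the determinant to be replaced by derivative rows while remaining inside $\mathrm{span}\{\omega(\cdot, t_\ell)\}$. Showing that the coalesced determinant is still nonzero is essentially a strengthened, extended-Chebyshev form of the interpolation property, and verifying this is the delicate technical step the proof must bear; once it is in hand, the contradiction above closes the argument.
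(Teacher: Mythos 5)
First, note that the paper does not prove this statement at all: Theorem \ref{real-zeros} is quoted from Iserles--N{\o}rsett \cite{IN1} and used as a black box in the proof of Theorem \ref{key}, so your attempt can only be measured against the standard argument in \cite{IN1}. Your high-level strategy is the right one (assume $p_m$ changes sign at only $y_1<\cdots<y_k$, $k<m$, build $\psi=\sum_{\ell}\beta_\ell\,\omega(\cdot,t_\ell)$ with the same sign pattern, and contradict $\int_a^b p_m\psi\,d\alpha=0$), but the way you handle the case $k<m-1$ has a genuine gap. Your plan is to use all $m$ weights, which forces $m-1-k$ auxiliary nodes, and then to remove their unwanted sign changes by coalescing node pairs into ``derivative rows.'' The smoothness hypothesis, however, is $\omega(x,\cdot)\in C_1(t)$, i.e.\ in the $t$-variable, while the rows you would need to coalesce are indexed by points $\eta_j$ in the $x$-variable; no differentiability in $x$ is assumed, so the derivative-row determinant is not even defined. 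Moreover, even granting smoothness, the nonvanishing of such a confluent determinant is an \emph{extended} Chebyshev property, which does not follow from the interpolation property (nonvanishing of determinants at distinct nodes only); you correctly flag this as the delicate step, but it is not merely delicate --- it is unavailable from the hypotheses.

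The standard repair, and essentially what \cite{IN1} does, avoids auxiliary nodes altogether: use only the first $k+1$ weights $\omega(\cdot,t_1),\dots,\omega(\cdot,t_{k+1})$. Linear algebra gives a nontrivial $\psi=\sum_{\ell=1}^{k+1}\beta_\ell\,\omega(\cdot,t_\ell)$ vanishing at $y_1,\dots,y_k$; the interpolation property (which, as stated in the paper, holds for every $m$ and every choice of distinct $t$'s in $\Omega$) makes this $(k+1)$-family a Haar system on $(a,b)$, so $\psi$ has no further zeros; and the classical fact that for a continuous Haar system nonnodal zeros count twice forces each $y_j$ to be a sign change of $\psi$. (Note that this last fact is also what your $k=m-1$ case silently uses when you assert that the $m-1$ zeros of your determinant are ``each a sign change''; it is true but needs the perturbation argument, not just the Haar property.) Since $k+1\le m$, biorthogonality gives $\int_a^b p_m\psi\,d\alpha=0$ while $p_m\psi$ has constant sign and is not $\alpha$-negligible (regularity of $\phi$ guarantees $\alpha$ has enough points of increase), a contradiction; hence $p_m$ has at least $m$ sign changes in $(a,b)$ and, being of degree $m$, exactly $m$ distinct zeros there. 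With the subfamily device replacing the coalescence step, your argument closes; as written, it does not.
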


\begin{proof}[Proof of Theorem \ref{key}]
First, note that the moments of $G$ in the $Q_k$ basis are
\[
\int_a^b Q_k(x)G(x,t)dx = \delta_k h_kR_k(t),
\] 
whence the measure associated with the generating function is regular by the hypotheses on $R_k$.
Now suppose that the polynomial $\sum_{k=0}^nq_kR_k(t)$ has zeros $\{t_1,\ldots,t_n\}$ which are all real, distinct, and in the interval $(a,b)$.  We claim the polynomial $p_m(x)=\sum_{k=0}^m\frac{q_k}{\delta_k h_k}Q_k(x)$ is the $m$-th polynomial in the biorthogonal polynomial system with respect to $G(x,t)dx$ and the sequence $\{t_1,\ldots t_n\ldots\}$.  Therefore, $p_n$ has only real zeros by Theorem \ref{real-zeros} ($G$ is positive modulo a sign change and has the interpolation property since it is SSR).  Indeed, expanding $p_m$ in the basis $Q_k$,
\[
\int_a^b p_m(x)G(x,t)dx = \sum_{k=0}^m\left(\frac{q_k}{\delta_kh_k}\right) \delta_k h_kt^k = \sum_{k=0}^mq_kR_k(t).
\] 
Since $\int_a^b p_m(x)G(x,t)dx = 0$ for $t=t_1,\ldots t_n$, the claim follows (for distinct zeros in $(a,b)$).  

If the zeros of $\sum_{k=0}^nq_kR_k(t)$ are real and in $(c,d)$, but are not distinct, consider a perturbation $\sum_{k=0}^nq'_kt^k$ which has only simple real zeros in $(c,d)$.  The corresponding zeros of $\sum_{k=0}^n\frac{q'_k}{\delta_kh_k}Q_k(x)$ are in $(a,b)$, and letting $q'_k\to q_k$ the zeros must lie in $[a,b]$ by continuity.  Then by Lemma \ref{open-sets}, the zeros of $\sum_{k=0}^n\frac{q_k}{\delta_kh_k}Q_k(x)$ must lie in $(a,b)$.
\end{proof}

%
%%%%%%%%%%%%%%%%%%%%%%%%%%%%%%%%%%%%%%%%%%%%%%%%%%%%%%%%%%%%%%

\end{document}